\documentclass[letterpaper, 10 pt, conference]{ieeeconf} 
\IEEEoverridecommandlockouts  
\usepackage{amsmath,amsfonts,amssymb}
\usepackage{algorithmic}
\usepackage{algorithm}
\usepackage{array}
\usepackage{xcolor}
\usepackage{flushend}
\usepackage{textcomp}
\usepackage{stfloats}
\usepackage{url}
\usepackage{verbatim}
\usepackage{graphicx}
\usepackage{caption}
\usepackage{subcaption}
\usepackage{multirow}
\usepackage{cite}
\newtheorem{theorem}{Theorem}
\newtheorem{definition}{Definition}
\newtheorem{lemma}{Lemma}\let\labelindent\relax
\usepackage{enumitem}

\begin{document}

\title{A Fixed-Time Sliding-Mode Framework for Constraint Optimization}

\author{Baby Diana, Priyanka Singh, Shyam Kamal, Sandip Ghosh and Bijnan Bandyopadhyay
\thanks{This work was supported by the project titled ``Center for Development of Drone Related Technologies (CDRT)", IIT (BHU) Varanasi, India under grant (No. R\&D/SA/I-DAPTIIT(BHU)/EE/22-23/08/448)}
\thanks{Baby Diana, Priyanka Singh, Shyam Kamal and Sandip Ghosh are with the Department of Electrical Engineering,  Indian Institute of Technology (BHU) Varanasi, India. 
        {\tt\small babydiana.rs.eee21@iitbhu.ac.in, priyankasingh.rs.eee21@itbhu.ac.in, shyamkamal.eee@iitbhu.ac.in, sghosh.eee@iitbhu.ac.in }}
        \thanks{Bijnan Bandyopadhyay is with the Department of Electrical Engineering,
Indian Institute of Technology, Jodhpur, Rajasthan 342030, India. {\tt\small bijnan@iitj.ac.in }}
}

\maketitle

\begin{abstract}
This paper develops a robust fixed-time optimization framework for constrained problems that guarantees exact constraint satisfaction and convergence to Karush–Kuhn–Tucker (KKT) points within a fixed time, independent of initial conditions. The approach treats the Lagrange multipliers as control inputs, composed of an equivalent control and a switching control, with the system states representing the decision variables. An equivalent control steers the gradient flow to a local KKT point asymptotically for nonconvex objectives and to the unique global optimum in fixed time for convex objectives. Constraint enforcement is achieved by embedding the equality constraints directly as a sliding manifold, with a fixed-time switching control ensuring rapid and reliable feasibility. The framework further accounts for matched disturbances, providing robustness guarantees that are theoretically characterized and illustrated using spherical constraints. Numerical studies on a 3-bus AC optimal power flow problem and a distributed consensus-based parameter estimation problem demonstrate the effectiveness, scalability, and robustness of the proposed approach.
\end{abstract}

\textbf{\textit{Key Words: } 
Fixed-time sliding mode control, Constrained optimization, Fixed-time gradient flow}

\section{Introduction}
Continuous-time optimization models algorithms as differential equations, providing insights into stability and convergence for large-scale convex and nonconvex problems. Gradient flow is the continuous analogue of gradient descent, while proximal methods arise from backward Euler discretization. Feasibility can be enforced by combining projected gradient and Gauss–Newton directions, and inequality-constrained problems incorporate KKT conditions to ensure constraint satisfaction \cite{schropp2000dynamical,zhou2007convergence}.

Extensive research on continuous-time optimization has focused on Lagrange multiplier methods, particularly primal-dual gradient dynamics (PDGD) \cite{arrow1958studies}, with exponential stability results for strongly convex and smooth objectives. Most works analyze existing algorithms rather than designing new ones. Exceptions include \cite{allibhoy2023control}, which uses control barrier functions for feasibility and stability, and \cite{feng2020dynamical}, which models constrained optimization as a perturbed projected dynamical system. More recently, \cite{cerone2025new} proposed the Controlled Multipliers Optimization (CMO) framework, treating Lagrange multipliers as control inputs and enabling systematic design of first-order algorithms with convergence guarantees for both convex and nonconvex equality-constrained problems.

Fixed-time gradient flows for constrained optimization have been studied in \cite{OJ24,garg_fixed}, with convergence to KKT points for strongly convex problems, and extended in \cite{diana2025finite} using CBFs. However, these works do not consider external disturbances, which can affect feasibility and convergence. This paper addresses this gap by proposing a robust fixed-time optimization framework, combining an equivalent control $u_{eq}$ term \cite{utkin2013sliding,kamal2026robust} to drive the KKT point (asymptotically or in fixed time) and a sliding term $u_{sw}$ \cite{moulay_fxt,kamal2026robust} to enforce constraint satisfaction in fixed time. Matched disturbances are included to capture practical implementation uncertainties such as noise, discretization, and modeling errors. The main contributions of this work are summarized as follows:
\begin{enumerate}
    \item By treating the equality constraints as a sliding surface, a control law is designed to ensure fixed-time convergence to constraint satisfaction, independent of initial conditions.
    \item On the sliding manifold, the reduced-order dynamics ensure local asymptotic convergence to a KKT point for nonconvex objectives, while for $\mu$-strongly convex objectives the primal dynamics converge to the unique global optimizer in fixed time.
    \item The proposed control framework ensures fixed-time reachability of the constraint manifold and convergence even in the presence of matched disturbances $\xi(x)$, as demonstrated for nonconvex problems with spherical constraints.
    \item \textit{Numerical validation:} We consider the three-bus AC power system and a distributed estimation problem to demonstrate applicability in physical systems with parameter uncertainties (e.g., load variations, modeling mismatch) and networked settings with communication noise and delays, highlighting scalability and robustness where centralized solutions are impractical, beyond discretization effects.
\begin{itemize}
    \item AC-OPF (3-bus): Active power minimization under power balance constraints with fixed-time convergence to a KKT point.
    \item Distributed estimation: Consensus-based convex problem ensuring fixed-time feasibility and convergence to the KKT solution.
\end{itemize}
\end{enumerate}
The paper is organized as follows: Section II presents notations and preliminaries; Section III formulates the problem; Section IV develops the proposed SMC-based optimization framework with finite-time and robustness analysis; Section V provides application results; and Section VI concludes the paper.

\section{Notations and Mathematical Preliminaries}
\subsection{Notations:}
Let $\mathbb{R}^n$ denote the set of real column vectors of dimension $n$. The $p$-norm $\|x\|_p$ for $p \in \{1,2,\infty\}$ is defined by: $\|x\|_1 = \sum |x_i|$, $\|x\|_2$ (Euclidean norm), and $\|x\|_\infty = \max |x_i|$. The absolute value of a scalar is denoted by $|\cdot|$. For a matrix $A \in \mathbb{R}^{m \times n}$, the range is $\mathrm{range}(A) = \{Ax\}$, and the kernel is $\ker(A) = \{x : Ax = 0\}$. The Jacobian of $h:\mathbb{R}^n\to\mathbb{R}^m$ is $J_h(x)\in\mathbb{R}^{m\times n}$ with $(i,j)$-th entry $\frac{\partial h_i(x)}{\partial x_j}$. Given $g:\mathbb{R}^n\to\mathbb{R}$, the gradient denotes as $\nabla g$ and $\nabla^2_{xx} g(x)\in\mathbb{R}^{n\times n}$ denotes its Hessian matrix of second-order partial derivatives with respect to $x$. The sign function is $\operatorname{sign}(x) = x/|x|$ for $x \neq 0$. $A \circ B$ means elementwise multiplication. For a function $\zeta:\mathcal{X}\to\mathcal{Y}$ with $\mathcal{X}\subseteq\mathbb{R}^n$ and $\mathcal{Y}\subseteq\mathbb{R}^m$, $\zeta \in \mathcal{C}^k(\mathcal{X},\mathcal{Y})$ denotes that $\zeta$ is $k$-times continuously differentiable. 
\subsection{Mathematical Preliminaries} 
Consider the dynamical system described by
\begin{equation}\label{1}
    \dot{x} = f(x(t)),
\end{equation}
where $x \in \mathbb{R}^n$ is the state vector, $f: \mathbb{R}^n \to  \mathbb{R}^n$ is a continuous nonlinear vector field with equilibrium at the origin $f(0)=0$, $x_0$ is the initial state, and $t_0 \ge 0$ is the initial time; these notions are used in Section IV for the Lyapunov-based analysis of the proposed method.

\begin{definition}[Fixed-time stability(FxTS)] \cite{polyakov_fix}
The origin is globally fixed-time stable if it is finite-time stable and there exists a $T_M>0$ which is uniform w.r.t $(t_0,x_0)$ such that settling time $T(t_0,x_0)\le T_M$ for all $x_0 \in \mathbb{R}^n$.
\end{definition}

\begin{lemma}{\cite{polyakov_fix}}
Consider system \eqref{1}. If there exists $V \in \mathcal{C}^1(\mathbb{R}^n,\mathbb{R}_{\ge 0})$ with $\psi_1(x) \le V(x) \le \psi_2(x)$, $V(0)=0$, and $\dot V(x) \le -\rho_1 V^{\beta_1}(x) - \rho_2 V^{\beta_2}(x)$, where $\rho_{1,2}>0$, $\beta_1 \in (0,1)$, $\beta_2>1$, then the origin is fixed-time stable with settling time $T(t_0,x_0) \le T_M = \frac{1}{\rho_1(1-\beta_1)} + \frac{1}{\rho_2(\beta_2-1)}$.
\end{lemma}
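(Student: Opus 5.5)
The plan is to follow the classical Polyakov argument: compare $V$ along trajectories with the scalar comparison equation $\dot v=-\rho_1 v^{\beta_1}-\rho_2 v^{\beta_2}$, and bound the time the right-hand side needs to drive any initial value to zero. Stability in the Lyapunov sense comes from the sandwich $\psi_1(x)\le V(x)\le\psi_2(x)$, taking $\psi_1,\psi_2$ to be class-$\mathcal{K}_\infty$ (or at least positive definite, continuous and radially unbounded) functions of $\|x\|$. Since $\dot V\le 0$, the sublevel sets of $V$ are forward invariant. Together with $\psi_1(\|x(t)\|)\le V(x(t))\le V(x_0)\le\psi_2(\|x_0\|)$ this gives Lyapunov stability, and radial unboundedness keeps solutions bounded so they exist on $[t_0,\infty)$.

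For the settling time, fix a solution and write $v(t)=V(x(t))$, so that $\dot v\le-\rho_1 v^{\beta_1}-\rho_2 v^{\beta_2}$ whenever $v>0$. The time is split into two phases.

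\textbf{Phase 1 (while $v\ge 1$).} Drop the $\beta_1$ term to get $\dot v\le-\rho_2 v^{\beta_2}$. Then $\frac{d}{dt}v^{1-\beta_2}=(1-\beta_2)v^{-\beta_2}\dot v\ge \rho_2(\beta_2-1)$. Since $v^{1-\beta_2}\ge 0$ initially and must not exceed $1$ while $v\ge1$, the time with $v\ge 1$ is at most $\frac{1}{\rho_2(\beta_2-1)}$, uniformly in $x_0$. This is where $\beta_2>1$ removes the dependence on initial conditions.

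\textbf{Phase 2 (once $v\le 1$).} By monotonicity $v$ stays in $[0,1]$. Drop the $\beta_2$ term to get $\frac{d}{dt}v^{1-\beta_1}\le-\rho_1(1-\beta_1)$ while $v>0$. Starting from $v^{1-\beta_1}\le 1$, $v$ reaches $0$ within $\frac{1}{\rho_1(1-\beta_1)}$, and it stays at $0$ afterwards because $\dot V\le 0$ and $V\ge0$.

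Adding the two phases gives $T(t_0,x_0)\le T_M$. Then $V=0$ forces $\psi_1(\|x\|)=0$, hence $x=0$, so the origin is reached and maintained. This is global finite-time stability with a uniform bound, i.e.\ fixed-time stability in the sense of the Definition.

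I expect the main obstacle to be rigor around the comparison step rather than the computation. $V(x(t))$ is only $C^1$ along absolutely continuous solutions, and the map $v\mapsto v^{1-\beta_1}$ is not differentiable at $0$. To handle this, I would apply the chain rule only on intervals where $v>0$ and use a first-hitting-time argument for the levels $1$ and $0$. If $f$ is merely continuous and solutions are not unique, I would note that the estimate holds for every solution, so the uniform bound $T_M$ still applies.
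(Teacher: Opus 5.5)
Your argument is correct and is essentially the standard two-phase comparison proof from Polyakov, which the paper cites rather than proving. In that proof, the $V^{\beta_2}$ term brings $V$ down to the unit level within $\frac{1}{\rho_2(\beta_2-1)}$ uniformly in $x_0$, and the $V^{\beta_1}$ term then drives it to zero within $\frac{1}{\rho_1(1-\beta_1)}$. Reading $\psi_1,\psi_2$ as class-$\mathcal{K}_\infty$ bounds in $\|x\|$ is the right implicit hypothesis, since it gives Lyapunov stability, global existence of solutions, and $V=0\Rightarrow x=0$.
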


Consider the optimization problem
\begin{equation}\label{eq:opt_problem}
\min_{x \in \mathbb{R}^n} \; \phi(x),
\end{equation}
where $\phi \in \mathcal{C}^2(\mathbb{R}^n,\mathbb{R})$ is a convex function. A point $x^\star$ is a global minimizer if and only if $\nabla \phi(x^\star)=0$, and it is unique when $\phi$ is strictly convex \cite{boyd_optmz}.
\begin{definition}[$\mu$-Strong Convexity ] \cite{beck_convx}
A function $\phi \in \mathcal{C}^1(\mathbb{R}^n,\mathbb{R})$ is $\mu$-strongly convex $(\mu>0)$ if $\phi(y) \ge \phi(x) + \nabla \phi(x)^\top (y-x) + \frac{\mu}{2}\|y-x\|^2,
\quad \forall x,y \in \mathbb{R}^n$.
\end{definition}

\begin{lemma} [Lojasiewicz’s Inequality]\cite{l_ineq}
If $ \phi \in \mathcal{C}^1(\mathbb{R}^n, \mathbb{R}) $ is real analytic near $ y \in \mathbb{R}^n $, then there exist constants $ \mu_f > 0 $ and $ \beta \in [0,1) $ such that  
$
\|\nabla \phi(x)\| \geq \mu_f |\phi(x) - \phi(y)|^\beta
$
holds in a neighbourhood of $ y $.
\end{lemma}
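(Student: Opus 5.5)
We prove the Łojasiewicz inequality, a classical result from real analytic geometry. It suffices to treat a point $y$ where $\nabla\phi(y)=0$. Indeed, if $\nabla \phi(y)\neq 0$, continuity gives a neighbourhood $U$ of $y$ on which $\|\nabla\phi(x)\|\ge \tfrac12\|\nabla\phi(y)\|>0$. Choosing $\beta=0$ and $\mu_f=\tfrac12\|\nabla\phi(y)\|$ then yields the inequality on $U$, since $|\phi(x)-\phi(y)|^0=1$. (We use the convention $0^0=1$; if one prefers to avoid it, take any $\beta\in(0,1)$ and shrink $U$ so that $|\phi(x)-\phi(y)|\le 1$.) So from now on we assume $y$ is a critical point, and after translating we take $y=0$ and $\phi(0)=0$.

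At a critical point of an analytic function we use the standard semianalytic-geometry route. Consider the two functions $f_1(x)=\|\nabla\phi(x)\|^2$ and $f_2(x)=|\phi(x)|$ on a small closed ball $\bar B$ around $0$. Both are subanalytic, since $f_1$ is analytic and $f_2$ is the absolute value of an analytic function. The key input is the Łojasiewicz comparison inequality for subanalytic (or semianalytic) functions. It says that if two continuous subanalytic functions $g,h$ on a compact set satisfy $g^{-1}(0)\subseteq h^{-1}(0)$, then $|h|^{N}\le C|g|$ for some $N,C>0$. We obtain it from the curve selection lemma together with the Puiseux expansion of the one-variable function
\[
m(r)=\min\{\|\nabla\phi(x)\| : x\in\bar B,\ |\phi(x)|=r\}.
\]
This function $m$ is subanalytic in $r$, and so it has an expansion $m(r)=c\,r^{\beta}(1+o(1))$ with $c>0$, which gives the claim.

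Before applying the comparison inequality we must check the zero-set inclusion $\{\nabla\phi=0\}\cap \bar B\subseteq\{\phi=0\}$ near $0$. This is the main obstacle. We plan to prove it by showing that $\phi$ is constant on each connected component of the critical set, and that this set is locally connected at $0$. Local connectedness holds because analytic sets admit a finite stratification into connected analytic manifolds that are arcwise connected to $0$ by analytic arcs, again by curve selection. Along any analytic arc $\gamma(t)$ inside the critical set we have $\frac{d}{dt}\phi(\gamma(t))=\nabla\phi(\gamma)^\top\dot\gamma=0$. Hence $\phi\equiv\phi(0)=0$ on the critical set near $0$.

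It remains to see that the exponent satisfies $\beta<1$. For this we use the gradient-trajectory/Puiseux argument along a selected arc $\gamma$ realizing $m$. Write $\phi(\gamma(t))\sim a t^{k}$ with $a\neq 0$. Along the arc, $|\tfrac{d}{dt}\phi(\gamma(t))|\le\|\nabla\phi(\gamma(t))\|\,\|\dot\gamma(t)\|$. Reparametrizing $\gamma$ so that $\|\dot\gamma\|$ is bounded then gives $\|\nabla\phi(\gamma(t))\|\gtrsim t^{k-1}\sim|\phi(\gamma(t))|^{(k-1)/k}$. This yields $\beta=(k-1)/k<1$. Since the set of values of $\beta$ obtained from finitely many selected arcs is bounded, we can take $\beta$ to be their maximum, still less than $1$. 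Finally we shrink the neighbourhood so that it lies inside $\bar B$, and set $\mu_f$ equal to the resulting constant $C^{-1/2}$ or $c/2$.
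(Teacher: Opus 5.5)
The paper gives no proof of this lemma. It is quoted from \cite{l_ineq} and used as a black box. Its only application is Theorem 3, where $\phi$ is $\mu$-strongly convex. There the bound actually needed is $\|\nabla\phi(x)\|^2\ge 2\mu(\phi(x)-\min\phi)$, i.e.\ $\beta=\tfrac12$ and $\mu_f=\sqrt{2\mu}$ globally. This follows in one line from Definition 2 by minimizing both sides over $y$, and needs no analyticity.

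You instead reconstruct the classical subanalytic proof, and the outline is sound. You reduce to a critical point, use the Puiseux expansion of $m$, and bound the exponent via $|(\phi\circ\gamma)'|\le\|\nabla\phi(\gamma)\|\,\|\dot\gamma\|$ along an analytic arc realizing $m$. This buys the genuinely local, nonconvex statement. However, $\beta$ and $\mu_f$ come out non-explicit, so your result could not replace the explicit strong-convexity constant that the paper's bound on $T_o$ relies on.

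Your write-up can be streamlined:
\begin{itemize}
\item The comparison inequality $|\phi|^N\le C\|\nabla\phi\|^2$ is never used, and it cannot give $\beta<1$ anyway.
\item A single application of curve selection does the work. Apply it to $A=\{x\in\bar B:\phi(x)\neq0,\ \|\nabla\phi(x)\|=m(|\phi(x)|)\}$ at a limit point $x_0$ of minimizers with $\phi(x_0)=0$. Since $|\phi\circ\gamma|$ maps $(0,\varepsilon)$ onto an interval $(0,r_0)$, this gives $m(r)\ge c\,r^{(k-1)/k}>0$ for all $r\in(0,r_0)$.
\item That bound already yields the zero-set inclusion once you shrink the neighbourhood so that $|\phi|<r_0$. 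Your stratification step is therefore correct but redundant.
\item One arc suffices, so the maximum over ``finitely many arcs'' is unneeded; as written, it is also unjustified.
\end{itemize}

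Two inputs should be stated explicitly:
\begin{itemize}
\item $m$ is subanalytic only via a nontrivial result such as Gabrielov's complement theorem, because its graph is defined with a universal quantifier.
\item In the critical case you need $\beta>0$ at points where $\phi(x)=\phi(y)$, given your convention $0^0=1$. This holds because $\nabla\phi(y)=0$ forces $k\ge2$ unless $\phi$ is constant near $y$.
\end{itemize}
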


\section{Problem Formulation}
Consider an optimization problem with equality constraints:
\begin{equation}
\min_{x \in \mathbb{R}^n} \phi(x) \quad \text{subject to} \quad h(x) = 0,
\label{eq:op}
\end{equation}
where $\phi \in  \mathcal{C}^n (\mathbb{R}^n,\mathbb{R})$ and $h \in \mathcal{C}^n (\mathbb{R}^n,\mathbb{R}^m)$ are possibly non-convex functions. The Lagrangian is defined as:
$\mathcal{L}(x, \lambda) = \phi(x) + \lambda^\top h(x)$,
with Lagrange multipliers \( \lambda \in \mathbb{R}^m \).
\begin{lemma}\cite{luenberger1984linear} \label{Fonc and SONC}
For optimization problem \eqref{eq:op} with $h$ satisfying the constraint qualification $\mathrm{rank}(J_h(x^*))=m$, any local minimizer $x^*$ admits a Lagrange multiplier $\lambda^*\in\mathbb{R}^m$ such that the first-order necessary condition $$\nabla \phi(x^*) + J_h(x^*)^\top \lambda^* = 0, \qquad h(x^*)=0,$$ holds. Moreover, since first-order conditions alone characterize stationary points that may include saddle points or maxima in nonconvex problems, a second-order necessary condition must also be satisfied, namely $$d^\top \nabla^2_{xx}\mathcal{L}(x^*,\lambda^*) d \ge 0 \quad \forall d \in \ker(J_h(x^*)).$$ These conditions follow from the Karush--Kuhn--Tucker(KKT) framework, which generalizes the classical method of Lagrange multipliers. 
\end{lemma}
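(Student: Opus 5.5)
The plan is the classical route from Luenberger: first-order conditions via the implicit function theorem, second-order conditions via curves lying on the constraint surface.

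\textbf{Step 1 (constraint surface as a graph).} Since $\mathrm{rank}(J_h(x^*))=m$ and $h\in\mathcal{C}^n$ with $n\ge 2$, partition coordinates (after a permutation) as $x=(y,z)$, with $y\in\mathbb{R}^m$ and $z\in\mathbb{R}^{n-m}$, so that $\partial_y h(x^*)$ is invertible. By the implicit function theorem there is a $\mathcal{C}^2$ map $g$ on a neighbourhood of $z^*$ with $h(g(z),z)=0$ and $g(z^*)=y^*$. Hence $\ker(J_h(x^*))$ is exactly the set of tangent vectors $\dot\gamma(0)$ of $\mathcal{C}^2$ curves $\gamma$ lying in $\{h=0\}$ with $\gamma(0)=x^*$. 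The inclusion ``$\supseteq$'' follows from differentiating $h(\gamma(t))=0$. For ``$\subseteq$'', given $d=(d_y,d_z)$, take $\gamma(t)=(g(z^*+td_z),\,z^*+td_z)$; its derivative has $z$-component $d_z$, and its $y$-component is forced by $J_h\dot\gamma=0$ to equal $d_y$.

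\textbf{Step 2 (first-order condition).} For any such curve, $t\mapsto\phi(\gamma(t))$ has a local minimum at $t=0$. Therefore $\nabla\phi(x^*)^\top d=0$ for all $d\in\ker J_h(x^*)$, that is, $\nabla\phi(x^*)\in\ker(J_h(x^*))^\perp=\mathrm{range}(J_h(x^*)^\top)$. This gives some $\lambda^*$ with $\nabla\phi(x^*)+J_h(x^*)^\top\lambda^*=0$, and $\lambda^*$ is unique because $J_h^\top$ has full column rank. The condition $h(x^*)=0$ holds by feasibility.

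\textbf{Step 3 (second-order condition).} Along a $\mathcal{C}^2$ curve $\gamma$ in the constraint set, $\mathcal{L}(\gamma(t),\lambda^*)=\phi(\gamma(t))$. So $t=0$ is still a local minimum of $\mathcal{L}(\gamma(t),\lambda^*)$, and its first derivative there is $\nabla_x\mathcal{L}(x^*,\lambda^*)^\top\dot\gamma(0)=0$. The second derivative is
\[
\dot\gamma(0)^\top\nabla^2_{xx}\mathcal{L}(x^*,\lambda^*)\dot\gamma(0)+\nabla_x\mathcal{L}(x^*,\lambda^*)^\top\ddot\gamma(0).
\]
The second term vanishes by Step 2, and this is precisely what removes the curvature term $\ddot\gamma$. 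The second derivative must be $\ge0$ at a minimum. Combining this with the surjectivity of curve tangents onto $\ker J_h(x^*)$ from Step 1 gives $d^\top\nabla^2_{xx}\mathcal{L}d\ge0$ for every $d$ in the kernel.

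\textbf{Main obstacle.} The main care point is Step 1. One must ensure that every kernel direction is realized by a feasible $\mathcal{C}^2$ curve, which requires the full-rank assumption and enough smoothness of $h$ for $g$ to be $\mathcal{C}^2$. The remaining steps are routine one-variable calculus.
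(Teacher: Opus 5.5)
Your proof is correct. The paper gives no proof of its own and relies on the cited Luenberger result, and your argument is essentially that classical one: identify $\ker(J_h(x^*))$ with the tangent space via the implicit function theorem under the rank condition, get $\nabla\phi(x^*)\in\mathrm{range}(J_h(x^*)^\top)$ from first derivatives along feasible curves, and get the second-order condition from the second derivative of $\mathcal{L}(\gamma(t),\lambda^*)$, where $\nabla_x\mathcal{L}(x^*,\lambda^*)=0$ removes the $\ddot\gamma(0)$ term. The only cosmetic difference is that Luenberger builds the feasible curve as $x^*+td+J_h(x^*)^\top u(t)$, solving for $u(t)\in\mathbb{R}^m$ using invertibility of $J_h(x^*)J_h(x^*)^\top$, instead of partitioning coordinates; this avoids the permutation step but is otherwise equivalent.
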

Consider the constrained optimization problem \eqref{eq:op}, motivated by \cite{cerone2025new} a dynamical systems interpretation of the stationarity conditions Lemma \ref{Fonc and SONC}:
\begin{equation}
\mathcal{H}_{FxTS}: 
\begin{cases}
\dot{x}(t) = -\nabla \phi(x(t)) - J_h(x(t))^\top \lambda_{FxTS}(x(t)), \label{eq: gf dynamics fx} \\
y(t) = h(x(t)),
\end{cases}
\end{equation}
where $\lambda_{FxTS}(x(t))$ is treated as a control input and $y(t)$ represents the regulated output (constraint manifold). We assume $J_h(x)$ is full rank. This control perspective motivates a feedback design for $\lambda_{FxTS}(x(t))$ to enforce the KKT conditions. An equilibrium $(x^*,\lambda^*)$ of $\mathcal{H}_{FxTS}$ is stationary if and only if $h(x^*)=0$. \\
\textbf{Problem Statement: }
Design a feedback control law $\lambda_{FxTS}(x)$ such that the resulting closed-loop system satisfies the following properties:
\begin{enumerate}
    \item \emph{Fixed-time constraint satisfaction:}  
    The constraint violation dynamics admit a sliding manifold $y=0$ that is reached in fixed time, i.e., $ \exists\, T_c > 0 \ \text{independent of initial conditions such that} \ 
    y(t)=0 \ \forall t \ge T_c$, using a fixed-time sliding mode control strategy in the sense of \cite{moulay_fxt}.

   \item \emph{Optimality:} On the sliding manifold, the reduced-order dynamics ensure asymptotic convergence $x(t)\to x^\star$ as $t\to\infty$ for nonconvex problems (KKT point), while for $\mu$-strongly convex objectives, there exists $T_c>0$ independent of initial conditions such that $x(t)=x^\star$ for all $t\ge T_c$ (unique global optimizer).
   
    \item \emph{Robustness to matched disturbances \cite{kamal2026robust}:}  
    For matched disturbances $\xi(t,x)=J_h^\top\eta(t,x)$ with $\|\eta(t,x)\|\le\bar{\eta}$, the modified feedback law preserves fixed-time reachability of the sliding manifold and guarantees the optimality convergence of the closed-loop dynamics $\dot{x}=-\nabla\phi(x)-J_h(x)^\top\lambda_{FxTS}+\xi(t,x)$.
\end{enumerate}

\section{Main Result}
\subsection{Fixed-time Sliding Mode control for Nonconvex constraint optimization}
Consider  the closed-loop system \eqref{eq: gf dynamics fx}. The sliding surface is constructed based on the output to enforce the constraint, and is given by:
\begin{equation}\label{eq: SF1}
\mathcal{S}(x) = \{x \in \mathbb{R}^n \mid h(x) = 0\}.
\end{equation}
The control objective is to drive the sliding variable $s(x)=h(x)$ to zero in fixed time and to steer the closed-loop system toward a stationary point $(x^\star,\lambda^\star)$ satisfying the first-order necessary optimality conditions of Lemma~\ref{Fonc and SONC}. To enforce fixed-time convergence of the  constraint satisfactions, i.e., $h(x)\to 0$ for all $t \geq T_c$, we propose the following discontinuous fixed-time feedback law:
\begin{align}
    \label{eq:lambda_fxt}
\lambda_{\mathrm{FxTS}}(x)&= G(x)^{-1}(- J_h(x)\nabla \phi(x) \nonumber\\
&+\!\alpha \circ \mathrm{sgn}(h(x))\! \circ \!|h(x)|^{p} + \beta \circ \mathrm{sgn}(h(x)) \!\circ\! |h(x)|^{q}),
\end{align} 
where $G(x)=J_h(x)J_h(x)^\top \in \mathbb{R}^{m\times m}$, $\alpha = \operatorname{diag}(\alpha_1,\dots,\alpha_n), \quad \beta = \operatorname{diag}(\beta_1,\dots,\beta_n), (\alpha_i,\beta_i)>0$, $0<p<1$, and $q>1$. Though Eq. \eqref{eq:lambda_fxt} has a diagonal SMC-like structure via element-wise action on $h(x)$, it differs by including fixed-time gains and the coupling term $J_h(x)\nabla \phi(x)$, forming a unified fixed-time optimization–control framework.
\begin{theorem}
Consider system \eqref{eq: gf dynamics fx} with control input $\lambda_{\mathrm{FxTS}}$ defined in \eqref{eq:lambda_fxt}, decomposed into $\lambda_{\mathrm{eq}}(x) = -G(x)^{-1}J_h(x)\nabla \phi(x)$ and $\lambda_{\mathrm{sw}}(x) = G(x)^{-1}\!\left[\alpha \circ \mathrm{sgn}(h(x)) \circ |h(x)|^{p} + \beta \circ \mathrm{sgn}(h(x)) \circ |h(x)|^{q}\right]$, with $\alpha_i,\beta_i>0$, $0<p<1$, and $q>1$; then the state trajectory reaches the constraint manifold $\mathcal{S}$ in fixed time, i.e., there exists $T_c>0$, independent of the initial condition, such that $h(x)=0$ for all $t\ge T_c$, with settling time satisfying $T_c \le T_{\max}= \frac{2}{\alpha_{\min}(1-p)} + \frac{2}{\beta_{\min}(q-1)}$, where $\alpha_{\min}=\min_i \alpha_i$ and $\beta_{\min}=\min_i \beta_i$.
\end{theorem}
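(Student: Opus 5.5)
The plan is to work directly with the sliding variable $s(x)=h(x)$ and show that, under the feedback \eqref{eq:lambda_fxt}, its dynamics decouple into $m$ independent scalar fixed-time systems. Each of these is then handled with the fixed-time Lyapunov lemma of \cite{polyakov_fix}. (I read $\alpha,\beta$ as $m\times m$ diagonal matrices, since they act elementwise on $h(x)\in\mathbb{R}^m$.)

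\textbf{Step 1 (well-posedness).} Since $J_h(x)$ has full row rank $m$ by assumption, $G(x)=J_h(x)J_h(x)^\top$ is positive definite, so $\lambda_{\mathrm{FxTS}}$ is well defined. Because $p>0$, the maps $s_i\mapsto \mathrm{sgn}(s_i)|s_i|^{p}$ and $s_i\mapsto \mathrm{sgn}(s_i)|s_i|^{q}$ are continuous. The closed loop therefore has a continuous right-hand side and admits (Peano) solutions.

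\textbf{Step 2 (sliding-variable dynamics).} Along trajectories of \eqref{eq: gf dynamics fx},
$$\dot s = J_h(x)\dot x = -J_h(x)\nabla\phi(x) - G(x)\lambda_{\mathrm{FxTS}}(x).$$
Substituting $\lambda_{\mathrm{eq}}$ cancels the coupling term $-J_h\nabla\phi$ exactly. Substituting $\lambda_{\mathrm{sw}}$ gives
$$\dot s_i = -\alpha_i\,\mathrm{sgn}(s_i)|s_i|^{p}-\beta_i\,\mathrm{sgn}(s_i)|s_i|^{q},\qquad i=1,\dots,m.$$
These equations are autonomous in $s$ and independent of $x$. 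Consequently $s(t)$ is determined componentwise, and any solution $x(t)$ that exists on $[0,T_{\max}]$ drives $h$ to zero.

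\textbf{Step 3 (Lyapunov bound).} For each $i$, take $V_i=\tfrac12 s_i^2$. Then
$$\dot V_i=-\alpha_i|s_i|^{p+1}-\beta_i|s_i|^{q+1}=-\alpha_i 2^{\frac{p+1}{2}}V_i^{\frac{p+1}{2}}-\beta_i 2^{\frac{q+1}{2}}V_i^{\frac{q+1}{2}}.$$
Here $\beta_1=\tfrac{p+1}{2}\in(0,1)$ and $\beta_2=\tfrac{q+1}{2}>1$, so the fixed-time stability lemma applies. It gives the settling time
$$T_i\le \frac{2}{\alpha_i 2^{\frac{p+1}{2}}(1-p)}+\frac{2}{\beta_i 2^{\frac{q+1}{2}}(q-1)}\le \frac{2}{\alpha_{\min}(1-p)}+\frac{2}{\beta_{\min}(q-1)},$$
where the last inequality uses $2^{(p+1)/2}\ge1$, $2^{(q+1)/2}\ge1$ and $\alpha_i\ge\alpha_{\min}$, $\beta_i\ge\beta_{\min}$. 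Since $\dot V_i\le0$ and $V_i=0$ is invariant, each $s_i$ stays at zero once it reaches it. Taking $T_c=\max_i T_i\le T_{\max}$ yields $h(x(t))=0$ for all $t\ge T_c$, uniformly in $x_0$.

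\textbf{Main obstacle.} The delicate point is not the $s$-analysis but guaranteeing that $x(t)$ itself does not escape in finite time before $T_c$, so that the substitution in Step 2 is valid on the whole interval. I would argue that $\dot x$ depends continuously on $x$ and on the bounded signal $s(t)$, and invoke the standing assumptions ($J_h$ full rank with $G^{-1}$ locally bounded, and $\phi,h$ smooth) to exclude blow-up on the finite horizon $[0,T_{\max}]$. Failing that, I would state forward completeness as an explicit assumption.

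A secondary choice is to use the componentwise functions $V_i$ rather than $V=\tfrac12\|s\|^2$. The aggregate $V$ would require norm-equivalence inequalities introducing $m$-dependent constants into the $|h|^{q}$ term, and working componentwise avoids this.
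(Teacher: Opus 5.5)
\textbf{Comparison with the paper's route.} Your proof is correct and reaches the stated bound, but by a different route. The paper uses the aggregate function $V=\tfrac12\|h\|_2^2$ and asserts
\[
\sum_i\beta_i|s_i|^{1+q}\ge\beta_{\min}\|s\|_2^{1+q}.
\]
For the $p$-term the analogous inequality is valid, since $\|s\|_{1+p}\ge\|s\|_2$. For $q>1$ and $m\ge2$, however, the direction is reversed, because $\|s\|_{1+q}\le\|s\|_2$; for example, $s=(1,1)$ and $q=2$ give $2<2^{3/2}$.

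Done correctly, the aggregate route yields only $\sum_i|s_i|^{1+q}\ge m^{(1-q)/2}\|s\|_2^{1+q}$. That produces a second settling-time term $\frac{2\,m^{(q-1)/2}}{2^{(1+q)/2}\beta_{\min}(q-1)}$, which exceeds $\frac{2}{\beta_{\min}(q-1)}$ once $m^{q-1}>2^{q+1}$.

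Your componentwise treatment avoids this. The exact cancellation by $\lambda_{\mathrm{eq}}$ makes the $s_i$-dynamics decoupled scalar ODEs, so Lemma~1 applies to each one with no norm-equivalence constants. Then $T_c=\max_i T_i$ satisfies the stated $m$-independent bound. A single aggregate Lyapunov function is the natural tool when the sliding dynamics are coupled, but here they are not, and exploiting the decoupling is what makes the theorem's bound actually provable.

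\textbf{On your ``main obstacle''.} You are right that forward existence of $x(t)$ is the real issue, and the paper's proof silently ignores it. The first fix you sketch, however, would not work: continuity of the closed-loop field and local boundedness of $G^{-1}$ do not exclude finite escape. Take $n=2$, $h(x)=x_1$, $\phi(x)=-x_2^3$; both are smooth and $J_h$ is uniformly full rank. The closed loop is
\[
\dot x_1=-\alpha\,\mathrm{sgn}(x_1)|x_1|^p-\beta\,\mathrm{sgn}(x_1)|x_1|^q,\qquad \dot x_2=3x_2^2 .
\]
Here $x_2$ blows up at $t=1/(3x_2(0))$, which precedes the reaching time of $x_1$ when $x_2(0)$ is large. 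Even nonemptiness of $\mathcal{S}$ is not implied by the hypotheses: with $n=m=1$ and $h(x)=e^{x}$, the closed loop drives $x\to-\infty$ in finite time.

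So the statement genuinely needs an extra hypothesis. One option is forward completeness, as you propose. A checkable sufficient condition is linear growth of $\nabla\phi$ together with $\sigma_{\min}(J_h(x))\ge c>0$. This works because $\|J_h^\top G^{-1}\|=1/\sigma_{\min}(J_h)$, $\|P\|\le 1$, and $s(t)$ stays bounded by its initial value, so $\|\dot x\|$ grows at most affinely in $\|x\|$. With that assumption made explicit, your argument is complete.
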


\begin{proof}
The proof is divided into two parts \footnote{Due to page limitations, the equation is presented in a more concise form. For clarity, dependence on $t$ and $x$ is omitted when it is obvious.}.

\emph{Part~1 (Sliding condition and equivalent control): }
On the manifold $\mathcal{S}$, invariance requires $\dot{h}(x)=0$, i.e., $J_h(x)\dot{x}=0.$
Substituting \eqref{eq: gf dynamics fx} yields
$J_h(x)\left(-\nabla \phi(x)-J_h(x)^\top\lambda_{\mathrm{eq}}(x)\right)=0$,
which implies $G(x)\lambda_{\mathrm{eq}}(x)=-J_h(x)\nabla \phi(x)$.
Since $G(x)$ is invertible by the full-row-rank assumption, the equivalent
control is uniquely given by $\lambda_{\mathrm{eq}}(x)=-G(x)^{-1}J_h(x)\nabla \phi(x)$.

\emph{Part~2 (Fixed-time reachability): }
Sliding surface is defined as $s=h(x)$ and consider the Lyapunov function $V=\frac12\|s\|_2^2$.
Differentiating along the closed-loop trajectories gives
\[
\dot{V}
= s^\top J_h(x)\dot{x}
= -s^\top\!\left[
\alpha \circ \mathrm{sgn}(s)\! \circ \! |s|^{p}
+ \beta \circ \mathrm{sgn}(s)\! \circ \! |s|^{q}
\right].
\]
Thus, 
\begin{align*}
    \dot{V}&= -\sum_{i=1}^m(\alpha_i |s_i|^{1+p}+ \beta_i |s_i|^{1+q}) \\
    &\le -\alpha_{\min}\|s\|_2^{1+p}-\beta_{\min}\|s\|_2^{1+q}.
\end{align*}
Using $\|s\|_2=\sqrt{2V}$, we obtain $\dot{V} \le -c_1 V^{\frac{1+p}{2}} - c_2 V^{\frac{1+q}{2}}$, where $c_1=\alpha_{\min}2^{\frac{1+p}{2}}$ and $c_2=\beta_{\min}2^{\frac{1+q}{2}}$.
Since $0<\frac{1+p}{2}<1$ and $\frac{1+q}{2}>1$, standard fixed-time
stability results imply convergence of $V$ to zero in fixed time, with
\begin{align}
 T_c \le \!\!\frac{1}{c_1\!\left(1-\frac{1+p}{2}\right)}
     +\!\frac{1}{c_2\!\left(\frac{1+q}{2}-1\right)} \nonumber\\
\le \!\frac{2}{\alpha_{\min}(1-p)} +\! \frac{2}{\beta_{\min}(q-1)}. \nonumber  
\end{align}
Hence, $h(x)=0$ for all $t\ge T_c$, completing the proof.
\end{proof}
\begin{lemma}[Asymptotic Optimality on the Sliding Surface]
Consider the closed loop system \eqref{eq: gf dynamics fx} with sliding surface \eqref{eq: SF1}.
Under the control input $\lambda_{FxTS}$ \eqref{eq:lambda_fxt} the state trajectory reaches sliding surface  in time $T_c$, then for all $t\ge T_c$ the closed-loop system evolves as
\[
\dot{x}=-P(x)\nabla \phi(x),
\]
where $P(x)=I-J_h(x)^\top G(x)^{-1}J_h(x)$. The projected gradient flow is asymptotically stable and converges to an optimal point $x^\star$. Along trajectories on the constraint manifold $\mathcal{S}$, the objective function $\phi(x)$ is non-increasing, and every accumulation point $x^\star$ satisfies the KKT conditions
\[
h(x^\star)=0,\qquad \exists,\lambda^\star \ \text{such that}\ \nabla \phi(x^\star)+J_h(x^\star)^\top\lambda^\star=0.
\]
If $x^ \star$ is a strict local minimum satisfying the second-order sufficient conditions, then it is locally exponentially stable on $\mathcal{S}$.
\end{lemma}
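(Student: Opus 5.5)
Once the theorem gives $h(x(t))=0$ for all $t\ge T_c$, the switching terms vanish, since $\mathrm{sgn}(0)\circ|0|^{p}=\mathrm{sgn}(0)\circ|0|^{q}=0$. The control therefore reduces to $\lambda_{\mathrm{eq}}(x)=-G(x)^{-1}J_h(x)\nabla\phi(x)$. Substituting into \eqref{eq: gf dynamics fx} gives
\[
\dot x=-\nabla\phi+J_h^\top G^{-1}J_h\nabla\phi=-P(x)\nabla\phi(x).
\]
I would check that $P(x)$ is the orthogonal projector onto $\ker J_h(x)$: it is symmetric, $P^2=P$, and $J_hP=0$. The last identity gives $\dot h=J_h\dot x=0$, so $\mathcal S$ is invariant. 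Since $G(x)$ is invertible and $J_h$ is $\mathcal C^1$, the vector field is locally Lipschitz near $\mathcal S$, which gives existence and uniqueness of solutions for $t\ge T_c$.

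For monotonicity I would take $\phi$ itself as a Lyapunov-type function on $\mathcal S$. Using $P=P^\top P$,
\[
\dot\phi=\nabla\phi^\top\dot x=-\nabla\phi^\top P\nabla\phi=-\|P\nabla\phi\|^2\le0,
\]
so $\phi$ is non-increasing. To characterize accumulation points I would use LaSalle's invariance principle on $\mathcal S\cap\{\phi\le\phi(x(T_c))\}$. This requires bounded trajectories, which I would impose as a standing assumption (compact sublevel sets of $\phi$ on $\mathcal S$). The largest invariant set in $\{\dot\phi=0\}$ is then $\{x\in\mathcal S:P(x)\nabla\phi(x)=0\}$. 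At such a point $\nabla\phi(x^\star)\in\ker(J_h)^\perp=\mathrm{range}(J_h^\top)$, so $\nabla\phi(x^\star)=-J_h(x^\star)^\top\lambda^\star$ with $\lambda^\star=\lambda_{\mathrm{eq}}(x^\star)$. Together with $h(x^\star)=0$, these are exactly the KKT conditions of Lemma~\ref{Fonc and SONC}.

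Convergence of the whole trajectory, rather than only along subsequences, is the main obstacle, since nonconvex gradient flows can drift along a continuum of critical points. I would try the Lojasiewicz inequality, using the analyticity available from the smoothness assumptions. Apply it to $\phi|_{\mathcal S}$ in local coordinates, where the Riemannian gradient is $P\nabla\phi$, to get $\|P\nabla\phi\|\ge\mu_f|\phi-\phi(x^\star)|^{\beta}$ near an accumulation point $x^\star$. The standard argument then shows that the arc length $\int\|\dot x\|\,dt=\int\|P\nabla\phi\|\,dt$ is finite, so $x(t)\to x^\star$. The proof would have to state this analyticity (or isolated critical points) as an explicit extra assumption.

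For local exponential stability I would linearize $-P\nabla\phi$ at $x^\star$ restricted to the tangent space $T=\ker J_h(x^\star)$. Differentiating $P(x)\nabla\phi(x)=\nabla\phi+J_h^\top\lambda_{\mathrm{eq}}(x)$ and using $\lambda_{\mathrm{eq}}(x^\star)=\lambda^\star$, the derivative applied to $d\in T$ is $P(x^\star)\nabla^2_{xx}\mathcal L(x^\star,\lambda^\star)d$. The terms involving $D\lambda_{\mathrm{eq}}$ lie in $\mathrm{range}(J_h^\top)$ and are killed by the projection onto $T$. The second-order sufficient condition says $d^\top\nabla^2_{xx}\mathcal L\,d\ge c\|d\|^2$ on $T$, so the restricted Jacobian $-P\nabla^2_{xx}\mathcal LP|_T$ is symmetric negative definite. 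Local exponential stability on $\mathcal S$ then follows by Lyapunov's indirect method, applied in a chart of the manifold.
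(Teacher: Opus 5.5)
Your proposal is correct and follows the paper's route. You substitute the equivalent control on $\mathcal{S}$ to get $\dot x=-P(x)\nabla\phi(x)$, use $\phi$ (equivalently $\phi-\phi(x^\star)$) as a Lyapunov function with $\dot\phi=-\nabla\phi^\top P\nabla\phi\le 0$ vanishing exactly at KKT points, and invoke the second-order sufficient condition for local exponential stability. Beyond the paper's brief argument you add the bounded-sublevel-set assumption for LaSalle, the analyticity (or isolated KKT points) hypothesis with the Lojasiewicz arc-length argument giving convergence to a single $x^\star$ rather than just to the KKT set, and the check that the tangent-space linearization is $-P\nabla^2_{xx}\mathcal{L}(x^\star,\lambda^\star)P$; the paper leaves all of these implicit, and the extra hypotheses are genuinely needed for the statement as written.
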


\begin{proof}
Once the trajectory is reached on sliding manifold $\mathcal{S}$ in fixed time $T_c$, the
dynamics satisfy $J_h(x)\dot x=0$. Substituting the equivalent control
$\lambda_{\mathrm{eq}}$, the reduced dynamics becomes $$\dot x=-P(x)\nabla \phi(x)$$ with
$P(x)=I-J_h(x)^\top(J_hJ_h^\top)^{-1}J_h(x)$. For $V(x)=\phi(x)-\phi(x^*)$,
$\dot V=-\nabla \phi(x)^\top P(x)\nabla \phi(x)\le0$, and equality holds iff the
KKT conditions are satisfied. Hence, trajectories converge to the set of KKT
points, which are locally exponentially stable if the second-order sufficient
conditions hold.
\end{proof}
KKT convergence follows from the equivalent sliding dynamics enforcing stationarity, while fixed-time feasibility guarantees constraint satisfaction under standard regularity assumptions.
\subsection{Robust Fixed-Time Reachability Under Matched Disturbances}
In this section, we extend the proposed framework to systems with matched disturbances and develop a fixed-time sliding mode control strategy for constraint enforcement. Despite the presence of disturbances, the closed-loop system remains asymptotically stable, while reachability of the constraint manifold is guaranteed within a fixed time $T_c$. The disturbed system is described by
\begin{equation}
\label{eq:robust_fx_time}
\dot{x} = -\nabla \phi(x) - J_h(x)^\top \lambda_{FxTS} + J_h^\top \eta(t,x),
\end{equation}
where the disturbance is matched with the control input and satisfies $\|\eta(t,x)\| \le \bar{\eta}$, for some known constant $\bar{\eta} > 0$.

\begin{theorem}
Consider the perturbed system \eqref{eq:robust_fx_time}. Assume $\mathrm{rank}(J_h(x))=m$ and let the control law be $\lambda = \lambda_{\mathrm{FxTS}}(x)$, with gains satisfying $\alpha_i > \bar{\eta}$ and $\beta_i>0$. Then, the sliding manifold $\mathcal{S}$ give by \eqref{eq: SF1} is reached in time $T_c \le \frac{2}{(\alpha_{\min}-\bar{\eta})(1-p)} + \frac{2}{\beta_{\min}(q-1)}$, independently of the initial condition. The effect of the disturbance is dominated by the discontinuous SMC term, preserving fixed-time reachability of the sliding surface.
\end{theorem}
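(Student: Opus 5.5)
The plan is to repeat the Lyapunov argument from the proof of the previous theorem, with the sliding variable $s=h(x)$ and $V=\frac12\|s\|_2^2$, and to track the extra term the disturbance contributes. Differentiating $s$ along \eqref{eq:robust_fx_time} gives $\dot s=J_h\dot x=-J_h\nabla\phi-G\lambda_{\mathrm{FxTS}}+G\eta$. Substituting \eqref{eq:lambda_fxt}, the equivalent-control part cancels $-J_h\nabla\phi$ exactly as in Part~1 of the previous proof, leaving
\[
\dot s=-\alpha\circ\mathrm{sgn}(s)\circ|s|^{p}-\beta\circ\mathrm{sgn}(s)\circ|s|^{q}+G(x)\eta(t,x).
\]
Hence $\dot V=-\sum_i(\alpha_i|s_i|^{1+p}+\beta_i|s_i|^{1+q})+s^\top G\eta$.

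The second step is to bound the cross term $s^\top G\eta$ and absorb it into the $\alpha$-term. This yields $\dot V\le-(\alpha_{\min}-\bar\eta)\|s\|^{1+p}-\beta_{\min}\|s\|^{1+q}$. With $\|s\|=\sqrt{2V}$ this becomes $\dot V\le -c_1'V^{\frac{1+p}{2}}-c_2V^{\frac{1+q}{2}}$, where $c_1'=(\alpha_{\min}-\bar\eta)2^{\frac{1+p}{2}}$ is positive because $\alpha_i>\bar\eta$. The fixed-time lemma of \cite{polyakov_fix} then gives the stated bound $T_c\le \frac{2}{(\alpha_{\min}-\bar\eta)(1-p)}+\frac{2}{\beta_{\min}(q-1)}$, by the same arithmetic as before. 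After $T_c$, $s\equiv0$. The equivalent control on $\mathcal S$ is then $\lambda_{\mathrm{eq}}+\eta$, so the matched disturbance is rejected and the reduced dynamics $\dot x=-P(x)\nabla\phi(x)$ of the preceding lemma are recovered.

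The main obstacle is the absorption step, and I expect it to need extra structure. There are two separate problems. First, the disturbance enters $\dot s$ as $G\eta$, not $\eta$, so the natural bound is $|s^\top G\eta|\le\|G\|\bar\eta\|s\|$. The threshold would then be $\|G\|\bar\eta$ rather than $\bar\eta$ unless $J_h$ is normalized so that $\|G(x)\|\le1$, or unless the disturbance is written as $J_h^\top G^{-1}\eta$ so that $\dot s$ receives $\eta$ directly. Second, even with $\dot s=\cdots+\eta$, the perturbation is linear in $\|s\|$. That is dominated by $\|s\|^{1+p}$ only when $\|s\|\ge1$; near $s=0$ the term $\|s\|^{1+p}$ is smaller than $\bar\eta\|s\|$, so $\alpha_i>\bar\eta$ alone does not give $\dot V<0$ there.

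I would first try to establish the result for $\|s\|\ge1$ exactly as above, which handles the unbounded part of the fixed-time estimate. For the region $\|s\|<1$, I would show that an additional pure switching component $\gamma\circ\mathrm{sgn}(s)$ is needed, i.e.\ the limiting case $p=0$ in the first gain. With $\gamma_i>\sup_x\|G(x)\|\bar\eta$, this gives $\dot V\le-(\gamma_{\min}-\|G\|\bar\eta)\|s\|$ near the origin and hence finite-time arrival from the unit ball. The two phase times then add to a bound of the stated form, with $\alpha_{\min}-\bar\eta$ read as the effective margin of the discontinuous gain.
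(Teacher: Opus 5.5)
Your proposal does not establish the statement as written, and the reason is the one you found. Your two objections are not just gaps in the Lyapunov estimate: they make the statement false. So no argument along your first two paragraphs can close. Those paragraphs are the paper's proof, except that the paper drops $G$.

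Here is a counterexample. Take $n=m=1$, $h(x)=x$ (so $J_h=G=1$), any $\phi$, and the admissible disturbance $\eta\equiv\bar\eta$. The closed loop is $\dot s=-\alpha\,\mathrm{sgn}(s)|s|^{p}-\beta\,\mathrm{sgn}(s)|s|^{q}+\bar\eta$. Its right-hand side is continuous, positive for $s<s^\ast$ and negative for $s>s^\ast$, where $s^\ast\in(0,1)$ solves $\alpha (s^\ast)^{p}+\beta (s^\ast)^{q}=\bar\eta$. Hence $h(x(t))\to s^\ast\neq 0$ from every initial condition and for every $\alpha>\bar\eta$. A trajectory starting on $\mathcal S$ leaves it at once, since $\dot s=\bar\eta>0$ there.

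The structural reason is that $\mathrm{sgn}(s)|s|^{p}$ with $p>0$ is continuous and vanishes on $\mathcal S$. So \eqref{eq:lambda_fxt} contains no discontinuous term, and on $\mathcal S$ one has $\dot s=G\eta$.

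The paper reaches its bound only through two steps. It writes the cross term as $h^\top\eta$ instead of $h^\top G\eta$. It then asserts $-\alpha_{\min}\|h\|^{1+p}+\bar\eta\|h\|\le-(\alpha_{\min}-\bar\eta)\|h\|^{1+p}$, which holds only for $\|h\|\ge 1$. These are exactly your two objections. Its closing claim that $\lambda_{\mathrm{FxTS}}$ handles the disturbance on $\mathcal S$ fails for the same reason. Your equivalent control $\lambda_{\mathrm{eq}}+\eta$ is correct, but the H\"older terms cannot generate it.

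Your repair is the right fix: add $G^{-1}\gamma\circ\mathrm{sgn}(h)$ to the multiplier with $\gamma_{\min}>\sup_x\|G(x)\|\bar\eta$. The paper itself does this in Example~1, where the augmentation $\rho\,\mathrm{sgn}(s)/(4\|x\|^2)$ with $\rho>4\|x\|^2\bar\eta$ carries exactly the factor $G=4\|x\|^2$ you predicted. Present the result as a corrected theorem, not as the stated one with $\alpha_{\min}-\bar\eta$ ``read as an effective margin.''

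The split at $\|s\|=1$ is also unnecessary. Globally, $s^\top(\gamma\circ\mathrm{sgn}(s))\ge\gamma_{\min}\|s\|_1\ge\gamma_{\min}\|s\|_2\ge\|G\|\bar\eta\|s\|_2\ge|s^\top G\eta|$. So the sign term absorbs the disturbance everywhere, the hypothesis $\alpha_i>\bar\eta$ becomes superfluous, and the nominal estimate of Theorem~1 (with $\alpha_{\min}$, not $\alpha_{\min}-\bar\eta$) follows. The componentwise margin $|(G\eta)_i|<\gamma_i$ is also what makes sliding on $\mathcal S$, with equivalent value $\eta$, well defined in the Filippov sense.

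Two loose ends remain:
\begin{enumerate}
\item $\sup_x\|G(x)\|$ may be infinite, as it is in Example~1. You then need a state-dependent gain or a bounded operating region.
\item You inherit from the paper's proof of Theorem~1 the inequality $\sum_i\beta_i|s_i|^{1+q}\ge\beta_{\min}\|s\|_2^{1+q}$, which is false for $m>1$ (take $s=(1,1)$, $q=3$). It holds only with an extra factor $m^{(1-q)/2}$. Alternatively, argue componentwise: once the sign term dominates $(G\eta)_i$, each component satisfies $\frac{d}{dt}|s_i|\le-\alpha_i|s_i|^{p}-\beta_i|s_i|^{q}$.
\end{enumerate}
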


\begin{proof}
Let $V=\tfrac12\|h(x)\|_2^2$. Along trajectories,
\[
\dot V = h^\top J_h \dot x
       = -h^\top\!\left[\alpha\!\circ\!\mathrm{sgn}(h)\!\circ\!|h|^p
       + \beta\!\circ\!\mathrm{sgn}(h)\!\circ\!|h|^q\right]
       + h^\top \eta.
\]
Using $\|h^\top\eta\|\le \|h\|\bar{\eta}$ and $\alpha_i>\bar{\eta}$ yields $\dot V \le -(\alpha_{\min}-\bar{\eta})\|h\|^{1+p}-\beta_{\min}\|h\|^{1+q}$, which implies fixed-time reachability by standard Lyapunov arguments. On the sliding manifold $h(x)=0$, the disturbance term $J_h^\top\eta$ is handled by the control $\lambda_{FxTS}$, yielding the nominal projected gradient dynamics. Hence, convergence to the KKT set follows from the disturbance-free analysis as for Lemma 4. 
\end{proof}
For a more detailed analysis, the reader is referred to the authors’ main paper~\cite{kamal2026robust}.\\
\begin{figure}[h!]
    \centering
    \includegraphics[width=1.05\linewidth]{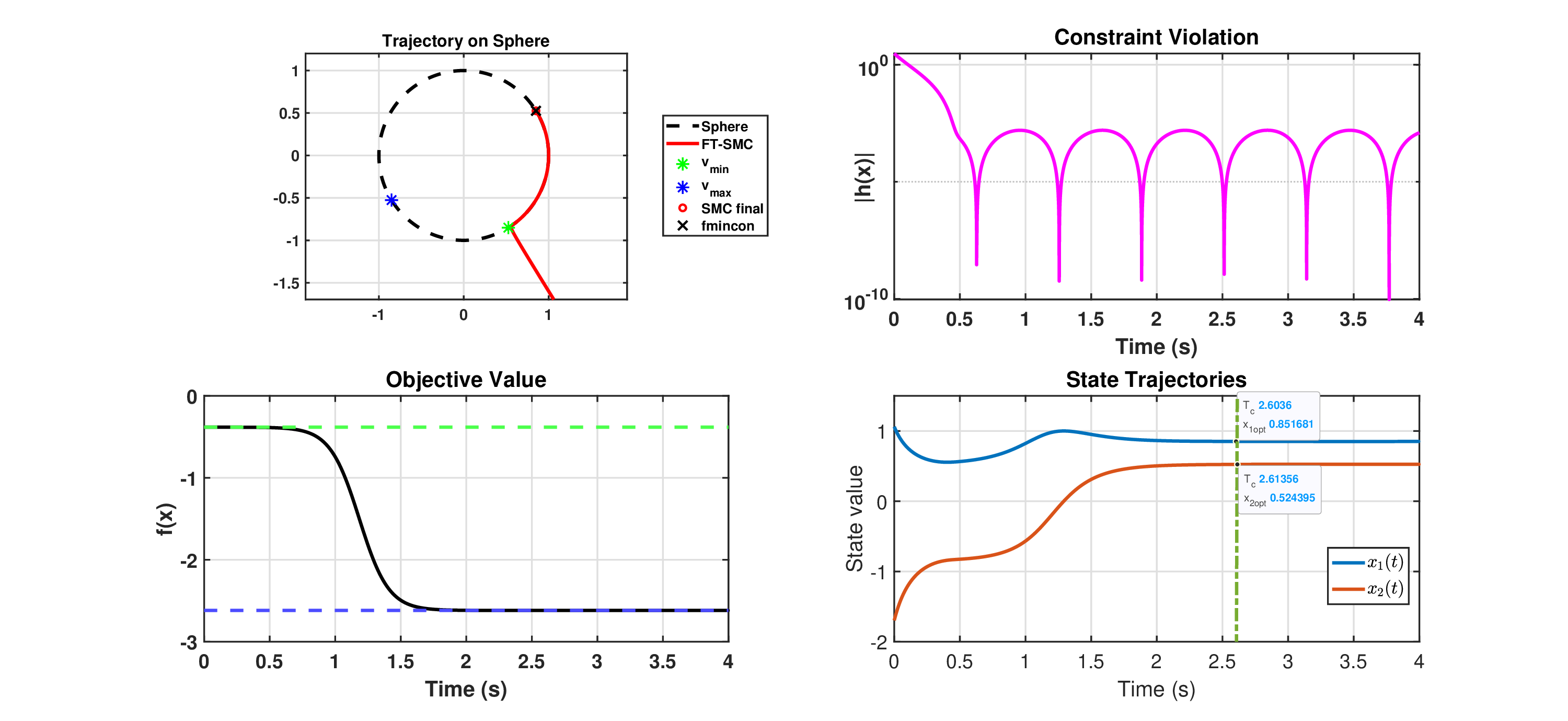}
    \caption{This figure shows fixed-time SMC design for Example 1 \eqref{eq:sphere_problem}. The fixed-time gains in \eqref{eq:lambda_fxt} are chosen as $\alpha=5$, $\beta=5$, $p=0.5$, and $q=1.5$. The proposed fixed-time SMC dynamics are validated against \textit{fmincon}, which converges to the optimal solution $x^{*} = [0.85,;0.52]^{\top}$ within $T_c \le 2.64$. The optimal objective value $\phi(x^*) = -2.61$ is achieved by both \textit{fmincon} and the proposed method under the matched disturbance $\delta = 0.05$ with $\eta(t)=\delta\sin(5t)$.}
    \label{fig: ex1}
\end{figure}

\textit{Example 1 (Sphere-Constrained Nonconvex Optimization):}
    Consider the nonconvex equality-constrained optimization problem
\begin{equation}
\label{eq:sphere_problem}
\min_{x\in\mathbb{R}^2} \phi(x)
\quad \text{s.t.} \quad
h(x)=\|x\|^2-1=0,
\end{equation}
The feasible set is the unit sphere $\mathcal{S}=\{x \in \mathbb{R}^2 :\|x\|=1\}$, with constraint Jacobian
$J_h(x)=2x^\top$ and Gram matrix $G(x)=4\|x\|^2$. We consider the control-oriented dynamics
\begin{equation}
\label{eq:sphere_dyn}
\dot{x}=-\nabla \phi(x)-2x\lambda(x),
\end{equation}
where $\lambda(x)$ is treated as a scalar control input. Defining the sliding
variable $s(x)=h(x)$, fixed-time constraint enforcement is achieved by
\begin{equation}
\label{eq:sphere_lambda}
\lambda_{\mathrm{FxTS}}(x)=\!\!\frac{1}{4\|x\|^2}\Big(\!\!-2x^\top\nabla \phi(x)+\alpha\,\mathrm{sgn}(s)|s|^p+\beta\,\mathrm{sgn}(s)|s|^q\!\Big),
\end{equation}
with $\alpha=5$, $\beta=5$, $p=0.5$, and $q=1.5$. The resulting sliding dynamics satisfy $\dot{s}=-\alpha\,\mathrm{sgn}(s)|s|^p-\beta\,\mathrm{sgn}(s)|s|^q$, which guarantees fixed-time convergence $s(x)\to 0$ for all initial conditions, with an upper bound $T_c \le \frac{1}{\alpha(1-p)}+\frac{1}{\beta(q-1)}$. Hence, $\|x\|=1$ for all $t\ge T_c$. On the manifold $\mathcal{S}$, the equivalent control is $\lambda_{\mathrm{eq}}=-\frac{1}{2}x^\top\nabla \phi(x)$, yielding the reduced dynamics
\begin{equation}
\label{eq:proj_grad}
\dot{x}=-(I-xx^\top)\nabla \phi(x),
\end{equation}
which corresponds to the projected gradient flow on $\mathcal{S}$. Along
\eqref{eq:proj_grad}, $\frac{\operatorname{d}}{\operatorname{dt}}\phi(x)=-\nabla \phi(x)^\top (I-xx^\top)\nabla \phi(x)\le 0$, with equality if and only if $\nabla \phi(x)=\lambda x$. Consequently, trajectories converge asymptotically to stationary (KKT) points of \eqref{eq:sphere_problem}. If the reduced Hessian
$(I-xx^\top)\nabla^2\phi(x)(I-xx^\top)$ is positive definite at a stationary point, local asymptotic stability is ensured. In the presence of matched disturbances
\begin{equation}
  \dot{x}=-\nabla \phi(x)-2x\lambda+2x\eta(t,x), \quad |\eta(t,x)|\le\bar{\eta},  
\end{equation}
robust fixed-time constraint satisfaction is preserved by augmenting
\eqref{eq:sphere_lambda} with $\rho\,\mathrm{sgn}(s)/(4\|x\|^2)$, where
$\rho>4\|x\|^2\bar{\eta}$, ensuring fixed-time reachability of the constraint
manifold and practical convergence to stationary points as shown in Fig. \ref{fig: ex1}.

\subsection{Fixed-time convergence of Convex optimization problem}
In this section, we consider convex optimization problems to establish fixed-time convergence to the optimal solution. The nonconvex case is not addressed, as fixed-time guarantees generally require additional structure. Using $\mu$-strong convexity, explicit convergence bounds are derived. Unlike primal–dual methods (e.g., \cite{OJ24,garg_fixed}), no dual dynamics are used; constraints are enforced via a fixed-time SMC mechanism. After reaching the constraint manifold, convergence reduces to a projected gradient flow, yielding fixed-time convergence to the KKT point in $T_c$. Specifically, for $\phi,h\in C^2$ with $\mathrm{rank}(J_h(x))=m$ on $\mathcal{S}=\{x:h(x)=0\}$, the closed-loop system \eqref{eq: gf dynamics fx} is modified as follows.
\begin{equation} \label{eq:fxgf_smc}
 \dot{x}=\!\!-F_1-\!J_h(x)^\top\! \Lambda_{FxTS}(x),   
\end{equation}
where $F_1=\gamma_1\frac{\nabla \phi(x)}{\|\nabla \phi(x)\|^{1-r_1}}+\gamma_2\frac{\nabla \phi(x)}{\|\nabla \phi(x)\|^{1+r_2}},~\gamma_1, \gamma_2>0$ $r_1 \in (0,1)$ and $r_2>1$. The fixed-time SMC multiplier $\Lambda_{FxTS}=\Lambda_{eq}+\Lambda_{sw}$ is defined as:
\begin{equation} \label{eq: fxts_1}
\begin{aligned}
\Lambda_{FxTS}
&= G(x)^{-1}\Big[-J_h(x)F_1 +\alpha \circ \operatorname{sgn}(h(x)) \circ |h(x)|^{p}\\
&\quad+ \beta \circ \operatorname{sgn}(h(x)) \circ |h(x)|^{q}
\Big].
\end{aligned}
\end{equation}
\begin{theorem}
Consider the closed system \eqref{eq:fxgf_smc} with control law \eqref{eq: fxts_1}, where 
$\alpha_i>0$, $\beta_i>0$, $0<p<1$, $q>1$, and $G(x)=J_h(x)J_h(x)^\top$ is positive definite.
Then:
\begin{enumerate}
    \item The constraint manifold $\mathcal{S}=\{x\mid h(x)=0\}$ is reached in fixed time $T_c \le \frac{2}{\alpha_{\min}(1-p)} + \frac{2}{\beta_{\min}(q-1)}$.
   
    \item If $\phi$ is $\mu$-strongly convex, then for $t\ge T_c$, the system converges to 
    the unique KKT point $x^\star$ in fixed time $T_o \le \frac{2}{\gamma_1(2\mu)^{\frac{1+r_1}{2}}(1-r_1)} + \frac{2}{\gamma_2(2\mu)^{\frac{1+r_2}{2}}(r_2-1)}$.
\end{enumerate}
The total convergence time $T_{total}=T_c+T_o$ is independent of $x(0)$.
\end{theorem}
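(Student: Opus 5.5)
The plan is to follow the two-stage structure of the first theorem of this section: first show reachability of $\mathcal{S}$, then analyze the reduced dynamics on $\mathcal{S}$ with a Lyapunov function built from strong convexity.

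\emph{Part 1 (reachability).} With $s=h(x)$, compute $\dot s=J_h\dot x=-J_hF_1-G\Lambda_{FxTS}$. Substituting \eqref{eq: fxts_1}, the $-J_hF_1$ term cancels exactly. This leaves the same decoupled dynamics $\dot s_i=-\alpha_i\operatorname{sgn}(s_i)|s_i|^p-\beta_i\operatorname{sgn}(s_i)|s_i|^q$ as before. The computation for $V=\tfrac12\|s\|^2$ is then verbatim the proof of the first theorem of this section. That proof gives $\dot V\le -c_1V^{\frac{1+p}{2}}-c_2V^{\frac{1+q}{2}}$, and the fixed-time stability lemma then yields the stated bound on $T_c$.

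\emph{Part 2 (optimality on $\mathcal{S}$).} For $t\ge T_c$, invariance $J_h\dot x=0$ gives the equivalent multiplier $\Lambda_{eq}=-G^{-1}J_hF_1$. The sliding dynamics are therefore $\dot x=-P(x)F_1$ with the orthogonal projector $P=I-J_h^\top G^{-1}J_h$. Since $F_1$ is a positive scalar multiple of $\nabla\phi$, this reads
\[
\dot x=-\Big(\gamma_1\|\nabla\phi\|^{r_1-1}+\gamma_2\|\nabla\phi\|^{-1-r_2}\Big)P\nabla\phi .
\]
Take $V_o=\phi(x)-\phi(x^\star)$ restricted to $\mathcal{S}$. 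Then
\[
\dot V_o=-\Big(\gamma_1\|\nabla\phi\|^{r_1-1}+\gamma_2\|\nabla\phi\|^{-1-r_2}\Big)\|P\nabla\phi\|^2 .
\]
To close the argument I would use the Polyak--Łojasiewicz consequence of $\mu$-strong convexity, $\|g\|^2\ge 2\mu V_o$. Here $g$ must be the relevant gradient: for the stated bound to come out, it should be $\|P\nabla\phi\|$ (the Riemannian gradient on $\mathcal{S}$). The stated bound suggests treating the sliding-phase flow as the fixed-time gradient flow with $\|P\nabla\phi\|$ in place of $\|\nabla\phi\|$. Equivalently, one can read $F_1$ on the manifold with the normalization taken in $P\nabla\phi$. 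Then $\dot V_o\le-\gamma_1\|g\|^{1+r_1}-\gamma_2\|g\|^{1-r_2}$. I would then apply $\|g\|\ge(2\mu V_o)^{1/2}$ to each term.

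This yields $\dot V_o\le -\gamma_1(2\mu)^{\frac{1+r_1}{2}}V_o^{\frac{1+r_1}{2}}-\dots$. For the second term I would need the exponent arranged as $\|g\|^{1+r_2}$, i.e.\ $F_1$'s second term effectively scaling as $\|\nabla\phi\|^{r_2}$. The exponent $\frac{1+r_2}{2}>1$ then gives the fixed-time structure. The fixed-time lemma gives $T_o\le \frac{2}{\gamma_1(2\mu)^{\frac{1+r_1}{2}}(1-r_1)}+\frac{2}{\gamma_2(2\mu)^{\frac{1+r_2}{2}}(r_2-1)}$, matching the statement. Since $V_o=0$ forces $x=x^\star$ by uniqueness of the strongly convex minimizer on $\mathcal{S}$, and both bounds are initial-condition free, $T_c+T_o$ is uniform.

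\emph{Main obstacle.} The hard step is the inequality $\|P\nabla\phi\|^2\ge 2\mu(\phi-\phi^\star)$ on $\mathcal{S}$. Strong convexity of $\phi$ on $\mathbb{R}^n$ gives PL for the full gradient, but not automatically for the projected gradient on a curved manifold. I would first handle affine $h$, where $\mathcal{S}$ is an affine subspace, $\phi|_{\mathcal{S}}$ is $\mu$-strongly convex, and $P\nabla\phi$ is exactly its gradient. In that case PL holds with the same $\mu$. For nonlinear $h$ I would state this as a standing assumption (geodesic strong convexity, or PL of $\phi$ on $\mathcal{S}$ with constant $\mu$), since it is needed for the claimed uniqueness of the KKT point. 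A secondary issue is the exponent bookkeeping in $F_1$ noted above. I would also need to justify that the discontinuous normalization is well defined away from $\nabla\phi=0$, by treating solutions in the Filippov sense.
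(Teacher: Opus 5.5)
Your skeleton is the same as the paper's. You cancel $-J_hF_1$ in $\dot s$, then feed $V_o=\phi-\phi(x^\star)$ on $\mathcal{S}$ through a PL-type bound and Lemma~1. The obstacle you isolate is exactly the step the paper skips. It invokes $\|\nabla\phi\|\ge\sqrt{2\mu V_o}$ for the \emph{full} gradient, although $\dot V_o=-\nabla\phi^\top PF_1$ (with $P=P_\perp$) only involves $P\nabla\phi$. Its inequality $\dot V_o\le-\gamma_1(2\mu V_o)^{r_1/2}-\gamma_2(2\mu V_o)^{r_2/2}$ neither follows from $F_1$ nor yields the stated $T_o$.

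\textbf{Part 2 gap.} You present the repair as a reading of $F_1$ (``equivalently'', ``exponent bookkeeping''). It is in fact a change of the vector field, and for the system as written Part~2 is false. Take $h(x)=x_2-1$, $\phi(x)=\tfrac12\|x\|^2$ ($\mu=1$), and start on $\mathcal{S}$ with $x_1(0)\neq0$. There $\|\nabla\phi\|=\sqrt{x_1^2+1}\ge1$, so $\dot x_1=-c(x)x_1$ with $0<c(x)=\gamma_1\|\nabla\phi\|^{r_1-1}+\gamma_2\|\nabla\phi\|^{-1-r_2}\le\gamma_1+\gamma_2$. Hence $|x_1(t)|\ge|x_1(0)|e^{-(\gamma_1+\gamma_2)t}>0$ for all $t$.

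More generally, whenever the unconstrained minimizer is infeasible, $\nabla\phi(x^\star)=-J_h(x^\star)^\top\lambda^\star\neq0$. The sliding field $-PF_1$ is then locally Lipschitz near $x^\star$, so its trajectories cannot reach $x^\star$ in finite time.

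You should therefore say explicitly that you prove an amended theorem. Set $g=P\nabla\phi$ and $F_1=\gamma_1g\|g\|^{r_1-1}+\gamma_2g\|g\|^{r_2-1}$. Then $J_hF_1=0$, so Part~1 is unaffected. On $\mathcal{S}$ you get $\dot V_o=-\gamma_1\|g\|^{1+r_1}-\gamma_2\|g\|^{1+r_2}$, and with $\|g\|^2\ge2\mu V_o$ this gives exactly the stated $T_o$. Stated this way your argument is sound and more careful than the paper's.

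That PL inequality holds automatically for affine $h$, but must be assumed otherwise. Even then PL alone does not give uniqueness. For $\phi=\tfrac12\|x\|^2$, $h=\|x\|^2-1$, every feasible point is a KKT point and PL holds trivially. So your step ``$V_o=0$ forces $x=x^\star$'' needs a stronger hypothesis than PL for nonlinear $h$.

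\textbf{Part 1 gap.} Part~1 inherits a false inequality from the first theorem's proof, which you invoke verbatim. The bound $\sum_i\beta_i|s_i|^{1+q}\ge\beta_{\min}\|s\|_2^{1+q}$ fails for $m\ge2$, since $\|s\|_{1+q}\le\|s\|_2$ when $q>1$. For $s=(1,1)$ and $q=3$ it would assert $2\beta_{\min}\ge4\beta_{\min}$.

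Your own decoupling remark fixes this. Each $z_i=|s_i|$ obeys $\dot z_i=-\alpha_iz_i^p-\beta_iz_i^q$, so it reaches zero within $\int_0^1\frac{dz}{\alpha_iz^p}+\int_1^\infty\frac{dz}{\beta_iz^q}=\frac{1}{\alpha_i(1-p)}+\frac{1}{\beta_i(q-1)}$. This is below the stated $T_c$ for every $m$. By contrast, the norm-based route, once corrected by the factor $m^{(1-q)/2}$, only gives an $m$-dependent bound.
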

\begin{proof}
\begin{enumerate}
    \item Using $V_c=\frac{1}{2}\|h\|^2$, we obtain $\dot{V}_c \leq -\kappa_1 V_c^{\frac{p+1}{2}} - \kappa_2 V_c^{\frac{q+1}{2}}$ with $\kappa_1=2^{\frac{p+1}{2}}\alpha_{\min}$, $\kappa_2=2^{\frac{q+1}{2}}\beta_{\min}$. By Lemma~1 (fixed-time stability), $h(t)=0$ for all $t\geq T_c$ with bound as given.

    \item For $t \geq T_c$, $h(x(t)) \equiv 0$ and \eqref{eq:fxgf_smc} reduces to the projected fixed-time gradient flow
\[
\dot{x} = -P_\perp(x)F_1,
\]
where $P_\perp = I - J_h^\top(J_h J_h^\top)^{-1}J_h$ projects onto $\ker J_h$. Let $V_o = \phi - \phi(x^\star)$. By $\mu$-strong convexity and Lojasiewicz’s Inequality, $\|\nabla\phi\| \geq \sqrt{2\mu V_o}$. Then $\dot{V}_o \leq -\gamma_1(2\mu V_o)^{\frac{r_1}{2}} - \gamma_2(2\mu V_o)^{\frac{r_2}{2}}$, which gives the fixed-time bound $T_o \leq \frac{2}{\gamma_1(2\mu)^{\frac{r_1}{2}}(1-r_1)} + \frac{2}{\gamma_2(2\mu)^{\frac{r_2}{2}}(r_2-1)}$.
\end{enumerate}
Both $T_c$ and $T_o$ are independent of $x(0)$; hence $T_{\mathrm{total}} = T_c + T_o$ is also independent of initial conditions.
\end{proof}
\section{Applications}
\subsection{Fixed-Time Sliding-Mode Gradient Flow Formulation for a Three-Bus AC--OPF}
Consider a three-bus AC power system consisting of one slack bus, one load bus, and one generator bus. The objective is to minimize the active power generation cost at the generator bus while satisfying the nonlinear AC power flow with equality constraints. The cost function is chosen as a convex quadratic function of the generated active power:
\begin{equation}
\phi(x) = a P_{g3}^{2} + b P_{g3},
\label{eq:cost}
\end{equation}
where $P_{g3}$ denotes the active power generated at Bus~3, and $a>0$, $b\ge 0$ are known cost coefficients.
Let the set of buses be $\mathcal{N}=\{1,2,3\}$.  
Bus~1 is designated as the slack bus with fixed voltage magnitude and angle $(V_1,\theta_1)=(1,0)$.
Bus~2 is a PQ (load) bus, and Bus~3 is a PV (generator) bus in Fig. \ref{fig:3 bus system}. The state vector is defined as $x =
\begin{bmatrix}
\theta_2 & \theta_3 & V_2 & V_3 & P_{g3} & Q_{g3}
\end{bmatrix}^{\top}
\in \mathbb{R}^{6}$.
The active and reactive power injections at bus $i\in\mathcal{N}$ are given by the standard AC power flow equations
\begin{align}
P_i(x) &=
\sum_{j=1}^{3} V_i V_j
\big(
G_{ij}\cos(\theta_i-\theta_j)
+
B_{ij}\sin(\theta_i-\theta_j)
\big),  \nonumber\\
Q_i(x) &=
\sum_{j=1}^{3} V_i V_j
\big(
G_{ij}\sin(\theta_i-\theta_j)
-
B_{ij}\cos(\theta_i-\theta_j)
\big),
\end{align}
where $G_{ij}$ and $B_{ij}$ denote the conductance and susceptance of the network admittance matrix. The AC power balance constraints are compactly written as 
$h(x) =
\begin{bmatrix}
P_2(x) + P_{d2} \\
Q_2(x) + Q_{d2} \\
P_3(x) - P_{g3} \\
Q_3(x) - Q_{g3}
\end{bmatrix}
= 0,$
where $P_{di}$ and $Q_{di}$ denote the active and reactive load demands. For the three-bus AC optimal power flow case study (Fig. \ref{fig:3 bus system}), the active and reactive load demands are given by $P_d=[0,\,0.6,\,0.4]^\top$ and $Q_d=[0,\,0.3,\,0.2]^\top$, respectively, with a susceptance matrix $B=\begin{bmatrix}
0 & -10 & -5\\
-10 & 0 & -8\\
-5 & -8 & 0
\end{bmatrix}$,
and a single generator located at bus~1. The quadratic generation cost in~\eqref{eq:cost} is parameterized by $a=0.2$ and $b=1$.
\begin{figure}[h!]
    \centering
    \begin{subfigure}{0.48\linewidth}
        \centering
        \includegraphics[width=0.9\linewidth]{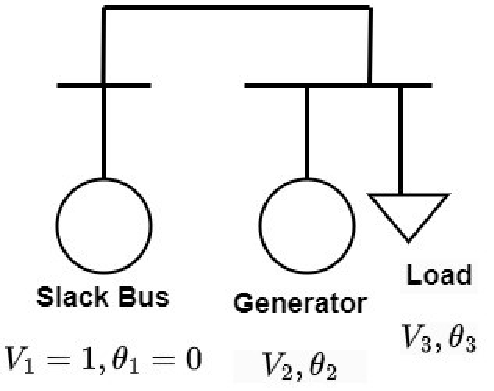}
        \caption{3-Bus system}
        \label{fig:3 bus system}
    \end{subfigure}
    \hfill
    \begin{subfigure}{0.48\linewidth}
        \centering
        \includegraphics[width=\linewidth]{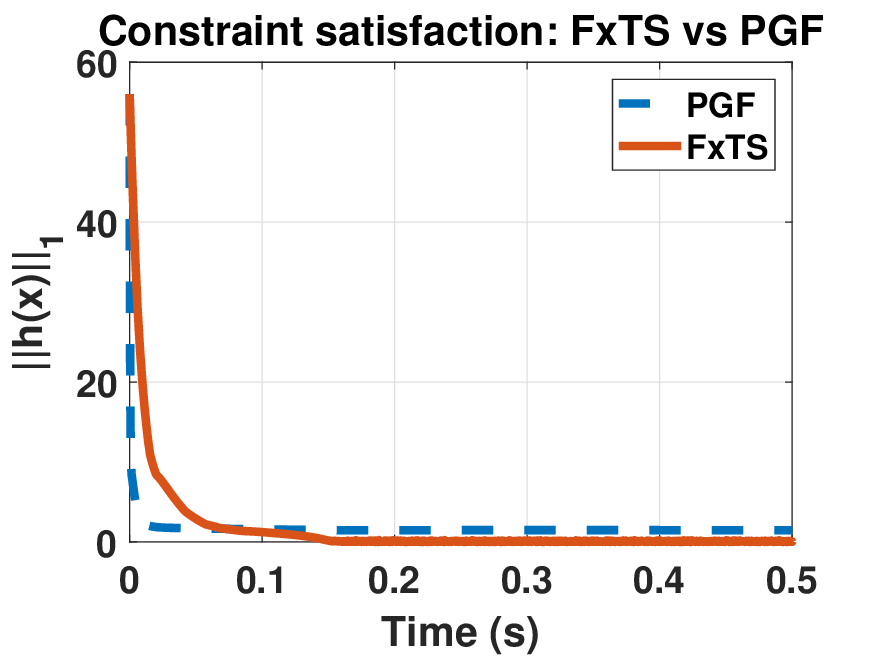}
        \caption{Constraint evolution}
        \label{fig:constraint}
    \end{subfigure}
    \caption{3-Bus System and constraint behavior}
    \label{fig:acopf}
\end{figure}
In the sliding-mode framework, the constraint vector defines the sliding manifold
$\mathcal{S}=\{x\in\mathbb{R}^{6}\mid h(x)=0\}$. The fixed-time sliding-mode gradient flow are given by~\eqref{eq: gf dynamics fx}, with $\lambda_{\mathrm{FxTS}}(x)$ designed via~\eqref{eq:lambda_fxt}, parameters $\alpha=\beta=4$, $p=0.5$, $q=2$, and initial condition $x(0)=[0.4,\,-0.3,\,0.9,\,1.1,\,0.5,\,0.2]^\top$. Under the proposed control law, $h(x)$ converges to zero in fixed time (Fig. \ref{fig:constraint}), after which the dynamics reduce to projected gradient descent on the feasible set, guaranteeing convergence to a KKT point of the AC--OPF problem. The interior-point method in \textit{fmincon} yields $\phi(x^\star)=1.2$, and simulations show convergence of the proposed dynamics to a physically equivalent optimum within $0.4 sec$, significantly faster than PGF (projected gradient flow with $\mu=5$) as shown in Fig. \ref{fig:state}.
\begin{figure}[h!]
    \centering
    \includegraphics[width=1.1\linewidth]{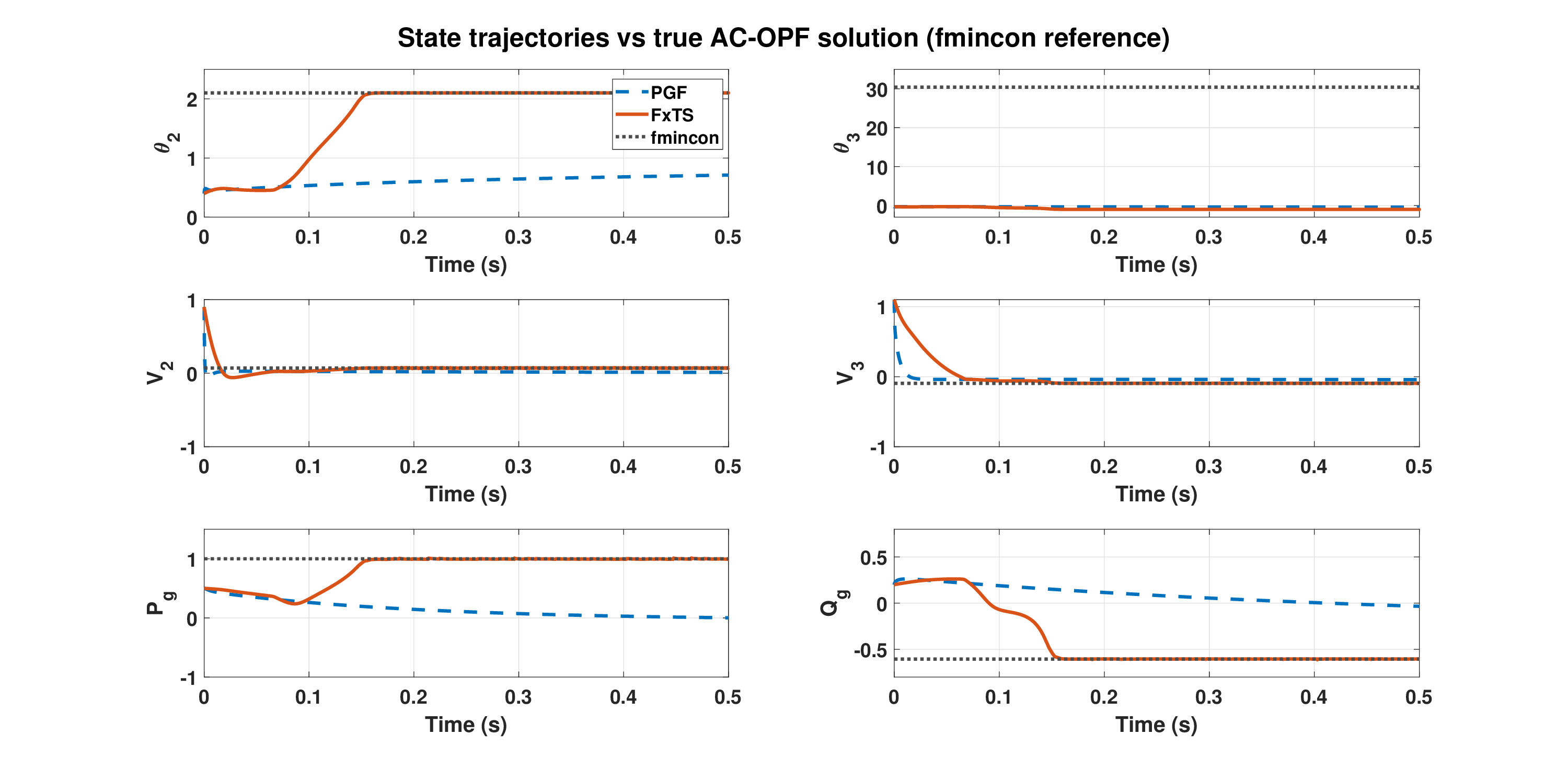}
    \caption{State trajectories of the three-bus AC--OPF system converge to the \textit{fmincon} reference solution $x^\star=[2.1007,\,30.4105,\,0.0698,\,-0.0943,\,1.0000,\,-0.6052]^\top$, with the proposed fixed-time SMC achieving faster convergence than PGF, within $T_c\le 0.4\,\mathrm{s}$; the angle discrepancy is due to the $2\pi$-periodicity of AC power-flow equations.}
    \label{fig:state}
\end{figure}
\subsection{Fixed-Time Convergence for Distributed Estimation}
Consider a network of $N$ agents estimating a common unknown parameter $\theta^\star \in \mathbb{R}^n$. Each agent $i \in \{1,\dots,N\}$ collects local measurements $ y_i = H_i \theta^\star + v_i$, where $H_i \in \mathbb{R}^{m_i \times n}$ is known and $v_i$ is zero-mean Gaussian noise with covariance $R_i \succ 0$. Each agent maintains a local estimate
$\theta_i \in \mathbb{R}^n$, and the aggregate estimation state is defined as  $x = \operatorname{col}(\theta_1,\theta_2,\dots,\theta_N) \in \mathbb{R}^{Nn}$. The distributed estimation problem is formulated as minimization of the convex quadratic cost:
\begin{equation}
    \phi(x)= \frac{1}{2}\sum_{i=1}^N(y_i - H_i \theta_i)^\top R_i^{-1}(y_i - H_i \theta_i),
\end{equation}
whose gradient is
$\nabla \phi(x)= \operatorname{col}\!\left(-H_1^\top R_1^{-1}(y_1 - H_1\theta_1), \dots,-H_N^\top R_N^{-1}(y_N - H_N\theta_N)\right)$. Agents communicate over an undirected connected graph characterized by the graph Laplacian $L \in \mathbb{R}^{N\times N}$. Exact consensus among all local estimates is enforced via the equality constraint $( L\otimes I_n)x=0$, where $\otimes$ denotes the Kronecker product and $I_n$ is the identity matrix.
The sliding variable is given $s(x)=(L \otimes I_n)x$. The constraint Jacobian is $J_h = L \otimes I_n$. To achieve fixed-time consensus and optimal estimation, the continuous-time SMC-based optimization dynamics are designed as \eqref{eq: fxts_1} with control input as \eqref{eq:lambda_fxt} where $\gamma_1=2,\gamma_2=2,\alpha=5,\beta=5, r_1=0.5, p=0.5$, $r_2=1.5,q=1.5$, and $\varepsilon=10^{-6}$.The complete dynamics is given with $G_L=(L^2 \otimes I_n)^{-1}$:
\begin{equation} \label{eq:complete_closed_loop}
    \begin{aligned}
        \dot{x} &= -(I-J_h^\top G_L J_h)F1\\
        &-G_L \Big[\alpha \frac{(L \otimes I_n)x}{(\|(L \otimes I_n)x\|+\varepsilon)^{1-p}}+ \beta \frac{(L \otimes I_n)x}{(\|(L \otimes I_n)x\|+\varepsilon)^{1+q}}\Big]
    \end{aligned}
\end{equation}
where $F_1=\gamma_1 \frac{\nabla \phi(x)}{(\|\nabla \phi(x)\|+\varepsilon)^{1-r_1}}+ \gamma_2 \frac{\nabla \phi(x)}{(\|\nabla \phi(x)\|+\varepsilon)^{1+r_2}}$.
\begin{figure} [ht!]
    \centering
    \includegraphics[width=0.7\linewidth]{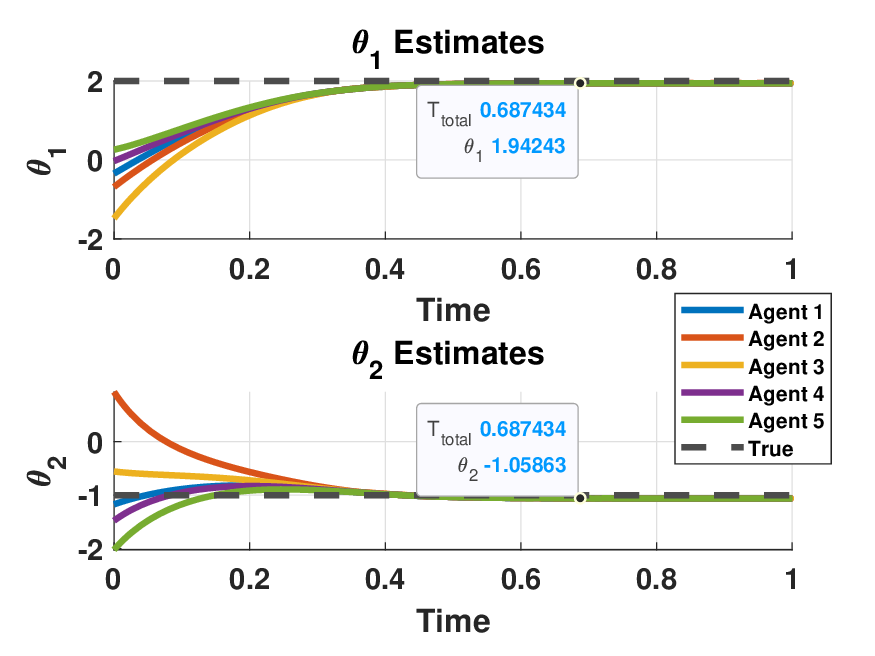}
    \caption{This figure illustrates convergence of distributed parameter estimates in \eqref{eq:complete_closed_loop} to the true values for a network of $N=5$ nodes estimating $\theta_1=2$ and $\theta_2=-1$, with initial condition $x_0=\mathrm{randn}(10,1)$, achieving fixed-time convergence within $T_{total}\le 0.7\,\mathrm{s}$.}
    \label{fig: Distributed estimations}
\end{figure}
We add regularization to prevent numerical singularities and ensure stable implementation when the gradient or consensus error approaches zero. The parameter of each agent converege to true value as shown in Fig. \ref{fig: Distributed estimations} with convergence time of $T_{total} \leq 0.7 sec$.

\section{Conclusion}
This paper proposed a robust fixed-time optimization framework for equality-constrained problems, guaranteeing fixed-time constraint satisfaction and convergence to KKT points for both convex and nonconvex objectives. By modeling constraints as a sliding manifold, the method combines equivalent and fixed-time sliding control terms to ensure optimality and robustness under matched disturbances, as validated through AC–OPF and distributed parameter estimation examples. Future work will address time-varying and stochastic constraints, scalability to large-scale and multi-agent settings, and adaptive or learning-based extensions while preserving fixed-time guarantees.

\bibliography{ref}
\bibliographystyle{IEEEtran}

\end{document}